\renewcommand{\emph}{\textit}		
\newcommand{\com}{\ifthenelse{\boolean{comm}}}
\newcommand{\sol}{\ifthenelse{\boolean{sol}}}
\newcommand{\note}{\ifthenelse{\boolean{notes}}}
\newtheorem{Def}{Definition}
\newtheorem{Prop}[Def]{Proposition}
\newtheorem{Th}[Def]{Theorem}
\newtheorem{Lem}[Def]{Lemma}
\theoremstyle{definition}
\newcommand{\mZ}{\ensuremath{\mathbb{Z}}}
\newcommand{\mK}{\ensuremath{\mathbb{K}}}
\newcommand{\mc}{\mathcal}								
\renewcommand{\phi}{\varphi}
\newcommand{\mb}{\begin{pmatrix}}					
\newcommand{\me}{\end{pmatrix}}						
\newcommand{\ssu}{\ensuremath{\mathcal{S}}}						
\newcommand{\stylish}{}					
\newcommand{\A}{\ensuremath{\stylish{A}} }
\newcommand{\B}{\ensuremath{\stylish{B}} }
\begin{document}

\title{\Large{\MakeUppercase{Noncommutative stable homotopy theory and $KK$-theory}}}
\author{\small{MARTIN GRENSING}}
\date{\small{\today}}
\maketitle

\thispagestyle{empty}
\begin{abstract} \small{Abstract: We construct  Kasparov's bifunctor $KK$ and $E$-theory by stable homotopy theoretic methods. This is motivated by  constructions of bivariant theories on more general categories such as, for example, bornological algebras. The details of the construction have interesting applications to groups of extensions.}
\end{abstract}
In  \cite{MR658514}, Rosenberg proposed to study monovariant $K$-theory from a stable homotopy point of view, and formulates the natural conjecture that Brown-Douglas-Fillmore $Ext$-theory and Kasparov's $KK$-theory should to some extend be describable in the stable homotopic framework. We will give a positive answer to this question, following ideas of Higson used to define  $E$-theory (\cite{MR1068250}); in particular, our constructions simplify the construction of $E$-theory by separating the stabilisation by compact operators from the stable-homotopy construction.

On the other hand, the bivariant $K$-functor of Kasparov (\cite{KaspOp}) is well known to be characterized  by certain exactness, stability and homotopy invariance properties. Cuntz has developed several versions of bivariant $K$-theories (\cite{MR2240217},\cite{MR1456322},\cite{MR2207702}) on the category of (complete) locally convex algebras, that posses desirable properties, referred to as $ckk$ in the sequel. An ultimate generalization has been given by in \cite{CuntzMeyer} to bornological algebras, based on the formalism of triangulated categories. 

On the other hand, in \cite{MR2964680}, we have shown, by a direct calculation using what we call locally convex Kasparov modules, that Bott periodicity holds for a larger class than $ckk$ is known to be universal for, namely for arbitrary split exact functors which are diffeotopy invariant and stable. The proof adapts readily to the $C^*$-setting. 

This makes it desirable to develop  universal split exact functors using only the Bott periodicity theorem. We here undertake this task for the category of $C^*$-algebras in a way that adapts readily to the categories of Banach, locally convex and Bornological algebras. At the same time, this extends $KK$ to not necessarily separable $C^*$-algebras.

We refer to \cite{MR2964680} for techniques allowing to calculate explicitly products of elements associated to Kasparov products in the category we construct here abstractly.
\newcommand{\ti}{X}

\section{Stable Homotopy Theory and Exactness}
We denote by $\mc{H}$ the homotopy category of $C^*$-algebras, i.e. the category with morphisms $\mc{H}(A,B):=C^*(A,B)/\sim$, where $\sim $ denotes homotopy.   The definitions of cones,  suspensions, cylinders, mapping cones and mapping cylinders are those from \cite{BlackK}. 
\begin{Def} Let $\mc{SH}$ be  the category with objects $\A_m$, where $\A$ is a $C^*$-algebras and $m\in \mZ$. We define the morphisms by
\[\mc{SH}(\A_m,\B_n):=\lim_{k\to\infty} \mc{H}(\ssu^{m+k}\A,\ssu^{n+k}\B)\]
where the connecting maps are given by the functor $\ssu$ and the limit is taken over all $k\in\mZ$ such that $m+k,n+k\geq 0$. 

$\Sigma$ denotes the functor defined by $\Sigma(\A_m):=\A_{m+1}$.
\end{Def}
We will frequently abbreviate $A_0$ to $A$. Note that as in the classical setting, the sets $\mc{SH}(A_m,B_n)$ carry natural group structures induced by concatenation. We denote by $X$ the class generated by $X^0$ and all isomorphisms in $\mc{SH}$.

We  denote by $X^0$ the collection of morphisms in of $C^*$-algebras that are inclusions of kernels of split surjections onto contractible algebras. 

Recall that Higson has given a (weak) set of axioms for a calculus of fractions in \cite{MR1068250}. We refer the reader to loc. cit. for the exact conditions on a class of morphisms in order to admit a calculus of fractions, and call such a set admissible. It is easy to see that Higson's axioms apply in the more general setting of categories which are not necessarily small, i.e., such that the isomorphism classes of objects form a set. In fact, it suffices to use (for example)  Grothendieck's framework of universes to carry over the theory from \cite{MR1068250}.

The fact that $X^0$ is an admissible set of morphisms follows essentially by the following two lemmas of independent interest: 

\begin{Lem}\label{stablepushout} Let 
\[\xymatrix{0\ar[r]&{I}\ar[r]^{\iota}&\A\ar[r]^\pi&\B \ar[r]&0}\]
be a split extension of $C^*$-algebras and $\phi:\mc{I}\to \mc{I}'$ a morphism. Then, up to homotopy equivalence,  there is a diagram with exact rows
\[\xymatrix{0\ar[r]&\ssu {I}\ar[r]^{\ssu \iota}\ar[d]_{\ssu \phi}&\ssu \A\ar[d]\ar[r]^{\ssu\pi}&\ssu \B\ar@{=}[d] \ar[r]&0\\
0\ar[r]&\ssu {I}'\ar[r]&\A'\ar[r]^{\pi'}&\ssu\B \ar[r]&0.
}\]
and where the bottom extension is split.
\end{Lem}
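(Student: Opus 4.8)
The plan is to build the bottom extension as a pushout of the suspended top extension along $\ssu\phi$, and then to check that the naive pushout can be replaced, up to homotopy equivalence, by an honest split extension. First I would form the algebraic pushout
\[
\A' := \ssu\A \oplus_{\ssu I} \ssu I' = \left(\ssu\A \oplus \ssu I'\right)/\{(\ssu\iota(x), -\ssu\phi(x)) : x \in \ssu I\},
\]
which comes equipped with the two structural maps $\ssu\A \to \A'$ and $\ssu I' \to \A'$; since $\ssu\iota$ is (split) injective, the map $\ssu I' \to \A'$ is injective, and one checks directly that its cokernel is $\ssu\B$, so that we obtain a short exact row $0 \to \ssu I' \to \A' \to \ssu\B \to 0$ together with the required commuting square on the left. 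The splitting of the top extension, say $s : \ssu\B \to \ssu\A$ (the suspension of a splitting of $\pi$), composes with $\ssu\A \to \A'$ to give a splitting of $\pi'$; so the bottom row is split as claimed, and the right-hand vertical map is the identity on $\ssu\B$ by construction.

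The one genuinely substantive point is that the algebraic pushout $\A'$ is in general only a pushout in the category of $*$-algebras, and one must argue that the relevant quotient is already closed, so that $\A'$ is a $C^*$-algebra and the sequence is exact in the $C^*$-sense; alternatively one replaces the pushout by the mapping-cylinder-type construction used in \cite{BlackK}, i.e. one uses the mapping cylinder of $\ssu\phi$ (or of a suitable representative) to realize the pushout up to homotopy by a genuine extension of $C^*$-algebras. Concretely, I would set
\[
\A' := \left\{ (a, f) \in \ssu\A \oplus C([0,1], \ssu I') : f(0) = \ssu\phi(\text{image of } a \text{ in } \ssu I),\ f(1) \in \ssu I' \right\}
\]
suitably interpreted, and observe that the evident projection to $\ssu\B$ is a split surjection with kernel homotopy equivalent to $\ssu I'$; the homotopy equivalence $\ssu I' \simeq \Ker$ is the standard deformation retraction of a mapping cylinder onto its target. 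This is exactly the kind of manipulation for which the cone/cylinder formalism of \cite{BlackK} is designed, and it is why the statement is only asserted ``up to homotopy equivalence''.

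With $\A'$ in hand, the remaining verifications are routine: exactness of the bottom row, commutativity of both squares (the left one by the universal property of the pushout, the right one by construction), and splitness of $\pi'$, all follow formally. I expect the main obstacle to be bookkeeping rather than conceptual — namely, pinning down a model of the pushout that is simultaneously a $C^*$-algebra, makes the left square commute on the nose, and carries an obvious homotopy equivalence on kernels; once the mapping-cylinder model is fixed, everything else is diagram-chasing. A secondary subtlety is that $\ssu$ must be applied to a chosen splitting of $\pi$ to obtain a splitting of $\pi'$, so one should fix the splitting of the top extension at the outset and track it through the construction.
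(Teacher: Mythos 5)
There is a genuine gap at both of the places where you actually try to define $\A'$. First, your ``algebraic pushout'' $(\ssu \A\oplus\ssu I')/\{(\ssu\iota(x),-\ssu\phi(x))\}$ is not an algebra: the subspace you divide out is not an ideal, since $(a,a')\cdot(\iota(x),-\phi(x))=(a\,\iota(x),-a'\phi(x))$ need not be of the form $(\iota(y),-\phi(y))$ for any $y\in I$. For modules the pushout of an extension along an arbitrary map of the kernel always exists (Baer sum formalism), but for $C^*$-algebras it does not, and this failure is precisely why the lemma suspends the extension and asserts the diagram only up to homotopy equivalence; so the issue is not, as you suggest, whether the quotient is closed. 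Second, your fallback ``mapping-cylinder'' model is not well defined: the condition $f(0)=\ssu\phi(\text{image of }a\text{ in }\ssu I)$ presupposes a $*$-homomorphism $\ssu\A\to\ssu I$, which does not exist --- the splitting goes the other way, $\ssu\B\to\ssu\A$, and the map $a\mapsto a-s\pi(a)$ is not multiplicative --- and the condition $f(1)\in\ssu I'$ is vacuous. The phrase ``suitably interpreted'' is hiding exactly the substantive step of the proof.

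The missing idea is to exploit the path structure of the suspension so that $\phi$ is only ever applied to elements of $I$. Take
\[
\A':=\bigl\{(f,g)\in C([0,1],\A)\oplus C([0,1],I')\;:\; f(1)=0,\ f(0)\in I,\ g(1)=0,\ g(0)=\phi(f(0))\bigr\},
\]
a closed $*$-subalgebra, with $\pi'\colon\A'\to\ssu\B$, $(f,g)\mapsto\pi\circ f$, split by $h\mapsto(s\circ h,0)$, and with $\ssu\A\to\A'$, $f\mapsto(f,0)$, which makes the right-hand square commute on the nose. The kernel of $\pi'$ is exactly the mapping cone of $ev_0\colon CI\to I'$, $f\mapsto\phi(f(0))$, and since $CI$ is contractible this mapping cone is homotopy equivalent to $\ssu I'$; this is precisely the replacement of $\ssu I'$ that the paper's proof prescribes, and under this identification the left-hand square commutes up to homotopy. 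So your general instinct --- realize the pushout only up to homotopy by a cone/cylinder construction --- agrees with the paper, but neither of the two models you wrote down is correct as stated, and the first one cannot be repaired at all without suspending.
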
 
\begin{proof}
The Lemma follows by replacing $SI'$ by the homotopy equivalent algebra which is the mapping cone of 
$$ev_0: CI\to I', f\mapsto \phi(f(0)).$$
\end{proof}

\begin{Lem} Let
\[  \xymatrix{0\ar[r]&{I}'\ar[r]_{\iota'}&\A'\ar[r]_{\pi'}&\B'\ar[r]&0}\]
be an extension of $C^*$-algebras with $B'$ contractible and $\pi$ split. Let $\phi:\A\to \A'$ be a morphism. Then up to homotopy equivalence there is a diagram with split exact rows
\[\xymatrix{0\ar[r]&I\ar[r]^{\iota}\ar[d]&\A\ar[d]^{\phi}\ar[r]^{\pi}& \B'\ar@{=}[d] \ar[r]&0\\
0\ar[r]& I'\ar[r]_{\iota'}&\A'\ar[r]_{\pi'}&\B'\ar[r]&0
}\]
\end{Lem}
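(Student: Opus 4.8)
The plan is to deduce the lemma by replacing $A$ with the mapping cylinder $Z_\phi$ of $\phi$, in the same spirit in which Lemma \ref{stablepushout} replaces $SI'$ by a mapping cone. Write $Z_\phi = \{(a,f)\in A\oplus C([0,1],A') : f(1)=\phi(a)\}$; recall from \cite{BlackK} that the projection $Z_\phi\to A$ is a homotopy equivalence, with homotopy inverse $a\mapsto (a,\mathrm{const}_{\phi(a)})$, and that under this equivalence the evaluation $e\colon Z_\phi\to A'$, $(a,f)\mapsto f(0)$, corresponds to $\phi$. I will take for the top row the extension
\[ 0\longrightarrow I\longrightarrow Z_\phi\xrightarrow{\ \tilde\pi\ }B'\longrightarrow 0,\qquad \tilde\pi:=\pi'\circ e,\quad I:=\ker\tilde\pi, \]
with middle vertical $e\colon Z_\phi\to A'$ and right vertical $\mathrm{id}_{B'}$. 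The right square then commutes by the very definition of $\tilde\pi$, and since $e(\ker\tilde\pi)\subseteq\ker\pi'=I'$ the map $e$ restricts to $I\to I'$, making the left square commute. Composing with the equivalence $Z_\phi\simeq A$ (under which $e$ becomes $\phi$) yields the diagram of the statement, with $A$ replaced by the homotopy-equivalent algebra $Z_\phi$.

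What has to be verified is that $\tilde\pi$ is a split surjection onto $B'$ — this is what makes the top row split exact and $\iota\colon I\hookrightarrow Z_\phi$ a member of $X^0$. Surjectivity is immediate: given $b'\in B'$, choose any lift $a'\in A'$ and note that $(0,\,t\mapsto (1-t)a')$ lies in $Z_\phi$ and is sent by $\tilde\pi$ to $\pi'(a')=b'$. For a homomorphic splitting I would use that $B'$ is contractible, which forces the section $\sigma'$ of $\pi'$ to be homotopic \emph{through $*$-homomorphisms} to the zero homomorphism $B'\to A'$, since $\sigma'=\sigma'\circ\mathrm{id}_{B'}\simeq\sigma'\circ 0=0$. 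Realizing such a homotopy as a $*$-homomorphism $F\colon B'\to C([0,1],A')$ with $F(b')(0)=\sigma'(b')$ and $F(b')(1)=0=\phi(0)$, the map $s\colon B'\to Z_\phi$, $s(b'):=(0,F(b'))$, is a $*$-homomorphism with $\tilde\pi\circ s=\pi'\circ\sigma'=\mathrm{id}_{B'}$; this is precisely where both hypotheses, that $\pi'$ is split and that $B'$ is contractible, enter.

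The main obstacle is exactly the construction of this splitting. Keeping $A$ and trying $\tilde\pi=\pi'\circ\phi$ fails at once — it need not be surjective (take $\phi=0$) — and although passing to $Z_\phi$ repairs surjectivity for free, one cannot exhibit a section of $\tilde\pi$ by any pointwise or path-wise formula: rescaling a path of lifts of $b'$ back to $0$ destroys multiplicativity, essentially because only a discontinuous characteristic-function rescaling would be compatible with products. The resolution is that the contraction of $B'$ is itself implemented by $*$-homomorphisms, so that $\sigma'\simeq 0$ holds \emph{within} the space of $*$-homomorphisms and produces an honest homomorphic splitting $s$; the remaining bookkeeping — split exactness of the two rows, commutativity of the two squares, and the homotopy equivalence $Z_\phi\simeq A$ carrying $e$ to $\phi$ — is routine.
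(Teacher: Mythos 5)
Your proposal is correct and follows exactly the paper's route: the paper's proof is precisely ``replace $A$ by the mapping cylinder $Z_\phi$,'' and you have supplied the details it omits, in particular the homomorphic section $s(b')=(0,F(b'))$ of $\tilde\pi=\pi'\circ e$ built by composing the splitting $\sigma'$ with the contraction of $B'$. Nothing further is needed.
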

\begin{proof} This follows by replacing $A$ with $Z_\phi$\end{proof}

\begin{Def} We denote by  $\mc{SH}(X^{-1})$ the localisation of $\mc{SH}$ with respect to $X$.\end{Def}
\begin{Th}  For every fixed $C^*$-algebra $D$ the functors $\mc{SH}(X^{-1})(D,\,\cdot\,)$ and $\mc{SH}(X^{-1})(\,\cdot\, ,D)$ are split exact. Every split exact, homotopy invariant Bott periodic functor factors uniquely over $\mc{SH}(X^{-1})$.
\end{Th}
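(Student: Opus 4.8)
The plan is to work throughout with the presentation of $\mc{SH}(X^{-1})$ furnished by the calculus of fractions. Once $X$ is admissible --- which, as indicated, rests on Lemma~\ref{stablepushout} and the lemma following it --- every group $\mc{SH}(X^{-1})(D,E)$ is a filtered colimit, over the morphisms of $X$ with source (resp.\ target) $E$, of the groups $\mc{SH}(D,\,\cdot\,)$, the transition maps being the pushouts (resp.\ pullbacks) along morphisms of $X$ that the two lemmas produce; concatenation makes these groups abelian and the localisation additive. The one structural input that does the work: for a split extension $0\to I\xrightarrow{\iota}A\xrightarrow{\pi}B\to0$ with section $s$, the mapping cone $C_\pi$ of $\pi$, realised as the pullback $A\times_B CB$, becomes isomorphic to $I$ in $\mc{SH}(X^{-1})$. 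Indeed the projection $C_\pi\to CB$ is a split surjection --- with section $\gamma\mapsto(s(\gamma(1)),\gamma)$ --- onto the contractible algebra $CB$ and with kernel $I$, so $I\hookrightarrow C_\pi$ lies in $X^0$, and under the resulting isomorphism the canonical map $C_\pi\to A$ is identified with $\iota$.

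To see that $\mc{SH}(X^{-1})(D,\,\cdot\,)$ is split exact, apply it to the mapping cone sequence $\ssu B\xrightarrow{\partial}C_\pi\to A\xrightarrow{\pi}B$ and its continuation to the left. In $\mc{SH}$ this induces an exact sequence of abelian groups on $\mc{SH}(D,\,\cdot\,)$ by the standard homotopy-fibre argument carried out levelwise in $\mc{H}(\ssu^kD,\,\cdot\,)$, $k\geq1$, followed by the colimit over $k$; and this exactness survives the further colimit over $X$, because, given $u\colon A\to A'$ in $X$, Lemma~\ref{stablepushout} and its companion fit $u$ into a morphism of mapping cone sequences with all three legs in $X$, reducing exactness in $\mc{SH}(X^{-1})$ to exactness in $\mc{SH}$. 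Combined with the isomorphism $C_\pi\cong I$ (in $X^0$) this gives an exact sequence
\[\mc{SH}(X^{-1})(D,\ssu B)\xrightarrow{\partial_*}\mc{SH}(X^{-1})(D,I)\xrightarrow{\iota_*}\mc{SH}(X^{-1})(D,A)\xrightarrow{\pi_*}\mc{SH}(X^{-1})(D,B).\]
Here $s_*$ splits $\pi_*$; exactness at $\mc{SH}(X^{-1})(D,A)$ is the required middle exactness; and $\iota_*$ is injective because $\partial_*=0$ --- the point being that $\partial\colon\ssu B\to C_\pi$ is the restriction to $\ssu B\subset CB$ of the section $CB\to C_\pi$ above, hence factors through the contractible $CB$ and is null-homotopic. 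So $\mc{SH}(X^{-1})(D,\,\cdot\,)$ is split exact; the contravariant functor $\mc{SH}(X^{-1})(\,\cdot\,,D)$ is handled dually, interchanging the two lemmas and the roles of pushout and pullback, with the same identification $C_\pi\cong I$.

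For the universal property, let $F$ be split exact, homotopy invariant and Bott periodic, with values in an additive category. Homotopy invariance makes $F$ factor through $\mc{H}$, and Bott periodicity lets one extend it to a functor $\tilde F$ on $\mc{SH}$ by $\tilde F(A_m):=F(\ssu^{m+2k}A)$ for $k$ large --- well defined up to canonical isomorphism and compatible with the colimits defining $\mc{SH}(A_m,B_n)$ by naturality of the Bott isomorphism, and with $\tilde F(A_0)=F(A)$. By construction $\tilde F$ of a split extension in $\mc{SH}$ is $F$ applied to a split extension of $C^*$-algebras, so $\tilde F$ is split exact on $\mc{SH}$; hence for $\iota\in X^0$ with contractible cokernel $B$ one gets a split extension $0\to\tilde F(I_0)\to\tilde F(A_0)\to\tilde F(B_0)\to0$ with $\tilde F(B_0)=F(B)=0$ (a contractible algebra is homotopy equivalent to $0$, and $F(0)=0$ by additivity), so $\tilde F(\iota)$ is an isomorphism. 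Thus $\tilde F$ inverts $X$ and factors uniquely through $\mc{SH}(X^{-1})$ by the universal property of the localisation, giving the factorisation of $F$. Uniqueness is immediate on the objects $A_m$ with $m\geq0$ --- since $A_m\cong(\ssu^mA)_0$ already in $\mc{SH}$, the value is forced to be $F(\ssu^mA)$ --- and on the formal desuspensions $A_m$, $m<0$, it is pinned down by the Bott periodic structure of $F$: one reads $A_m$ through the functor $\Sigma$, which preserves $X$ and so descends to a self-equivalence of $\mc{SH}(X^{-1})$, and uses Bott periodicity to return to honest suspensions. This is the sole purpose of that hypothesis.

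The step I expect to be the main obstacle is precisely the preservation of the mapping cone (Puppe) exact sequences under the formal inversion of $X$: that the filtered colimit over $X$ computing the Hom-groups of $\mc{SH}(X^{-1})$ is a colimit of compatibly exact sequences of abelian groups. This is where Lemma~\ref{stablepushout} and its companion are indispensable --- each supplying, for one of the two variances, the completions of a morphism of $X$ to a morphism of mapping cone sequences --- and it is also where one must keep track of the non-smallness of $\mc{SH}$, handled as in the text via universes, when manipulating these colimits.
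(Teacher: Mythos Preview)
Your proposal is correct and follows the same route as the paper --- Puppe/mapping-cone exactness in $\mc{SH}$, its survival under the localisation, and the replacement of $C_\pi$ by $\Ker\pi$ when $\pi$ is split --- while spelling out the details the paper leaves implicit (that $I\hookrightarrow C_\pi$ lies in $X^0$, that $\partial$ is null-homotopic via the section $CB\to C_\pi$, and the entire universal-property half of the statement). Where the paper simply cites Higson for the preservation of exactness under localisation, you argue it directly as a filtered colimit of compatible Puppe sequences; this is a legitimate expansion of the same idea rather than a different approach.
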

\begin{proof}  It follows by standard arguments that for every morphism $\phi:A\to B$ one has exact sequences
\[\xymatrix{\mc{SH}(D,C_\phi)\ar[r]^{\pi_*}&\mc{SH}(D,A)\ar[r]^{\phi_*}&\mc{SH}(D,B)}\]
where $\pi:C_\phi\to \A$ is the canonical projection from the mapping cone sequence of $\phi$ (compare \cite{MR816237} or \cite{MR658514}). The image of this extension in $\mc{SH}(X^{-1})$ remains exact, as follows from results in \cite{MR1068250}. If $\phi$ is a split surjection, then one may deduce split exactness by replacing $C_\phi$ with $Ker(\phi)$. 
\end{proof}
All that remains is to stabilize the functor we have thus obtained. We denote by $\mK$ the compact operators on a separable Hilbert space.
\begin{Th}\label{refme} Define 
$$\tilde KK(A,B):=  \mc{SH}(X^{-1})(A\otimes\mK ,B\otimes\mK ).$$
Then $\tilde KK$ is naturally isomorphic to Kasparov's bivariant $K$-functor.
\end{Th}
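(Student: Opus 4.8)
The plan is to identify $\tilde{KK}$ with $KK$ by playing off two universal properties against each other: the universal property of $\mc{SH}(X^{-1})$ just established, and the Cuntz--Higson characterisation of Kasparov theory, which says that on separable $C^*$-algebras $KK$ is the universal split exact, homotopy invariant, $C^*$-stable functor (equivalently, the canonical functor from $C^*$-algebras to the Kasparov category $\mathfrak{KK}$ is initial among functors with these three properties). Accordingly I would first show that $\tilde{KK}$ has these three properties, then feed $KK$ back through the universal property of $\mc{SH}(X^{-1})$, and finally match the two comparison maps using the uniqueness clauses.

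First, the three properties of $\tilde{KK}$. Split exactness in either variable is immediate from the preceding theorem together with the elementary facts that $-\otimes\mK$ is exact and sends split extensions to split extensions. Homotopy invariance is clear, since $\mc{SH}$, hence $\mc{SH}(X^{-1})$, is built over the homotopy category and $-\otimes\mK$ preserves homotopies. The real content is $C^*$-stability, i.e.\ that every corner inclusion $D\otimes\mK\hookrightarrow D\otimes\mK\otimes\mK$ becomes invertible in $\mc{SH}(X^{-1})$. For this I would use the standard observation that a split exact, homotopy invariant functor is automatically invariant under inner automorphisms and matrix-stable, and then exploit the fact that \emph{both} entries of $\tilde{KK}$ have been stabilised: inside the multiplier algebra of $D\otimes\mK$ one has a pair of isometries with orthogonal ranges summing to $1$, and the usual swindle turns matrix-stability plus inner-automorphism invariance into genuine $C^*$-stability. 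This is the step I expect to be the main obstacle, because the localisation $\mc{SH}(X^{-1})$ was manufactured only to invert mapping-cone inclusions and \emph{a priori} knows nothing about corner embeddings; the verification has to be carried out by hand using the split exactness supplied by the previous theorem. Once it is done, the Cuntz--Higson property yields a natural, composition-preserving transformation $\gamma\colon KK(A,B)\to\tilde{KK}(A,B)$.

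Conversely, $KK$ is split exact, homotopy invariant and Bott periodic, and since it inverts the suspension it extends canonically to a functor $\widehat{KK}\colon\mc{SH}\to\mathfrak{KK}$, with $\widehat{KK}(A_m,B_n)\cong KK_{\,n-m}(A,B)$. By the universal property of $\mc{SH}(X^{-1})$ proved above, $\widehat{KK}$ factors uniquely through a functor $\bar L\colon\mc{SH}(X^{-1})\to\mathfrak{KK}$; evaluating $\bar L$ on the $\mK$-stabilised objects and composing with the ($C^*$-stability) isomorphism $KK(A\otimes\mK,B\otimes\mK)\cong KK(A,B)$ of classical Kasparov theory gives a natural transformation $\delta\colon\tilde{KK}(A,B)\to KK(A,B)$.

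It remains to see that $\gamma$ and $\delta$ are mutually inverse. The composite $\delta\circ\gamma$, precomposed with the canonical functor from $C^*$-algebras, is by construction again that canonical functor into $\mathfrak{KK}$; hence $\delta\circ\gamma=\mathrm{id}$ by the uniqueness in the Cuntz--Higson universal property. Together with the first part of the argument this shows that $\tilde{KK}$ itself is a split exact, homotopy invariant, $C^*$-stable functor through which every such functor factors (factorisation along $\delta$) and does so uniquely (using $\delta\circ\gamma=\mathrm{id}$); being thus a second model of the universal object, it is canonically isomorphic to $\mathfrak{KK}$ via $\gamma$, with inverse $\delta$, which gives the theorem. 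Two points of care: one restricts throughout to separable $A$ and $B$, where ``$KK$'' and its universal property live, and the comparison statements should be read $2$-functorially --- $\gamma$ and $\delta$ are functors between categories under the category of $C^*$-algebras, not merely transformations of abelian-group-valued bifunctors --- but this is routine once the universal properties are available.
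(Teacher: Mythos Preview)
Your overall architecture---play the universal property of $\mc{SH}(X^{-1})$ against the Cuntz--Higson characterisation of $KK$---is exactly the paper's.  The one substantive divergence is in how you obtain $C^*$-stability of $\tilde{KK}$, and here you make the task much harder than it is.  You argue that split exactness and homotopy invariance force invariance under multiplier isometries and then run the $\mc O_2$-type swindle in $M(D\otimes\mK)$; this works, but it drags in the machinery of the previous theorem and the delicate passage from matrix-stability to $\mK$-stability.  The paper bypasses all of this with a single elementary observation: the corner inclusion $\mK\to\mK\otimes\mK$ coming from a minimal projection is already a \emph{homotopy equivalence} of $C^*$-algebras, hence (after tensoring with any $A$) an isomorphism in $\mc H$, and a fortiori in $\mc{SH}(X^{-1})$.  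Thus $\tilde{KK}(A\otimes\mK,B)\cong\tilde{KK}(A,B)$ for free, with no appeal to the localisation or to split exactness.  Your remark that ``the localisation $\mc{SH}(X^{-1})$ \emph{a priori} knows nothing about corner embeddings'' is therefore misplaced: the relevant corner embedding is invertible before one ever localises.  Once stability is in hand, your construction of $\gamma$ and $\delta$ and the uniqueness argument are fine and amount to the paper's ``usual characterisation of $KK$ as a universal functor'' sentence, spelled out in detail.
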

\begin{proof}
$\tilde KK$ is stable because the inclusion $\mK\to \mK\otimes \mK$ obtained from a minimal projection is invertible up to homotopy. The usual characterization of $KK$ as a universal functor shows that we re-obtain $KK$.
\end{proof}
Using the same approach, but for the class $Y$ generated by all morphisms of $C^*$-algebras that are inclusions of kernels of surjections (not necessarily with a split) onto contractable algebras and all isomorphisms in  $\mc{SH}$, one obtains $E$-theory:
\begin{Th} Set
$$\tilde E(A,B):=\mc{SH}(Y^{-1})(A\otimes\mK ,B\otimes\mK ).$$
Then $\tilde E$ is naturally isomorphic to the functor $E$ defined in \cite{MR1068250} and \cite{MR1065438}.
\end{Th}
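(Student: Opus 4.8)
The plan is to repeat verbatim the construction that produced $\tilde{KK}$, with the class $X$ replaced by the larger class $Y$; the one genuinely new phenomenon is that localising at $Y$ enforces half-exactness, i.e.\ exactness along \emph{all} extensions rather than only split ones. First I would verify that $Y$ is admissible in the sense of Higson. This is obtained from the evident variants of Lemma~\ref{stablepushout} and of the lemma immediately following it, in which the word ``split'' is deleted from both hypothesis and conclusion: the proof of the first replaces $S I'$ by the mapping cone of $ev_0\colon CI\to I'$, $f\mapsto\phi(f(0))$, and the proof of the second replaces $A$ by the mapping cylinder $Z_\phi$; neither construction uses a splitting, and in each case the quotient of the new extension remains a cone, so the kernel-inclusions produced lie again in $Y$. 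Hence $\mc{SH}(Y^{-1})$ is a well-defined localisation, and, just as in the proof of the theorem on $\mc{SH}(X^{-1})$, the mapping-cone (Puppe) sequences stay exact after localising at $Y$.

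The crucial new input is the following observation. Let $0\to J\xrightarrow{\iota}A\xrightarrow{\pi}B\to 0$ be an arbitrary extension and let $j\colon J\to C_\pi$ be the canonical map into the mapping cone with $p\circ j=\iota$, where $p\colon C_\pi\to A$ is the projection of the mapping-cone extension $0\to SB\to C_\pi\xrightarrow{p}A\to 0$. In the standard model $C_\pi=\{(a,f)\in A\oplus C_0([0,1),B):f(0)=\pi(a)\}$, with $j(a)=(a,0)$, the second-coordinate projection $(a,f)\mapsto f$ is a surjection of $C_\pi$ onto the contractible algebra $C_0([0,1),B)$ whose kernel is exactly $j(J)$. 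Thus $j$ is an inclusion of the kernel of a surjection onto a contractible algebra, so $j\in Y$ and $j$ becomes invertible in $\mc{SH}(Y^{-1})$. Feeding this into the exact mapping-cone sequence (now available in $\mc{SH}(Y^{-1})$) and into its co-exact counterpart, one obtains that for every $D$ both $\mc{SH}(Y^{-1})(D,\,\cdot\,)$ and $\mc{SH}(Y^{-1})(\,\cdot\,,D)$ are exact on all short exact sequences; and, by the same reasoning as before, every homotopy invariant, Bott periodic, half-exact functor factors uniquely over $\mc{SH}(Y^{-1})$.

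Finally, exactly as in the proof of Theorem~\ref{refme}, $\tilde E(A,B)=\mc{SH}(Y^{-1})(A\otimes\mK,B\otimes\mK)$ is $C^*$-stable because the corner inclusion $\mK\to\mK\otimes\mK$ coming from a minimal projection is invertible up to homotopy, and it inherits homotopy invariance together with the half-exactness established above. Since a homotopy invariant, $C^*$-stable, half-exact functor on separable $C^*$-algebras is automatically Bott periodic, the displayed factorisation property identifies $\tilde E$ as the universal homotopy invariant, $C^*$-stable, half-exact bifunctor; this is precisely the characterisation of $E$-theory in \cite{MR1068250} and \cite{MR1065438}, and the resulting comparison isomorphism is natural.

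I expect the main difficulty to be concentrated in the second paragraph. Getting the mapping-cone conventions right so that the quotient $C_\pi/j(J)$ is genuinely a cone, and matching $j$ with the classical comparison map so that the Puppe sequence can be transported along it, requires care; and the ``factors uniquely'' clause rests on knowing that a half-exact, homotopy invariant, stable functor inverts the suspension — the Bott periodicity theorem in this generality, the half-exact analogue of the computation of \cite{MR2964680} invoked for $ckk$ — which must be in place before any such functor can be pushed through $\mc{SH}$ and then through its localisation at $Y$.
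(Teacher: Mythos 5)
Your proposal is correct and follows exactly the route the paper intends (the paper states this theorem without proof, saying only "using the same approach" as for $\tilde{KK}$): rerun the localisation machinery with $Y$ in place of $X$, note that the canonical inclusion $J\to C_\pi$ lies in $Y$ so that localisation yields exactness for all extensions, stabilise by $\mK$, and invoke Higson's characterisation of $E$ as the universal homotopy invariant, stable, half-exact functor. Your second paragraph supplies precisely the excision detail the paper leaves implicit, so there is nothing to add.
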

It is also easy to construct intermediate theories as in \cite{MR1667652}-our constructions replace the rather involved construction of the "noncommutative suspension $J$" by the straightforward suspension. 

Because $\tilde KK$ admits outer products one may now adopt the formal proof from \cite{kkT} to give a proof of the Thom isomorphism based only on Bott periodicity and without reference to the concrete description of $KK$: 
\begin{Prop} Let $H$ be a split exact, homotopy invariant and $\mK$-stable functor. Then 
\begin{align*} H(C_0(E))\approx H(C_0(X))
\end{align*}
for every complex bundle $E\to X$.
\end{Prop}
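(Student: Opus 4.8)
The plan is to reduce the statement to the existence of a $\tilde KK$-equivalence between $C_0(E)$ and $C_0(X)$ and then to import the formal argument of \cite{kkT}. Since $H$ is split exact, homotopy invariant and $\mK$-stable, it factors through $\tilde KK$ by the universal property established above (Theorem~\ref{refme}); hence it suffices to produce an invertible element of $\tilde KK(C_0(X),C_0(E))$, for then $H$ carries it to an isomorphism $H(C_0(E))\approx H(C_0(X))$. For a \emph{trivial} bundle $E=X\times\mC^{n}$ one has $C_0(E)\cong C_0(X)\otimes C_0(\mR^{2})^{\otimes n}$ (identifying $\mC^{n}\cong\mR^{2n}$), and by Bott periodicity the Bott class $\mC\to C_0(\mR^{2})$ is invertible in $\tilde KK$; its $n$-fold outer power, outer-multiplied by $\id_{C_0(X)}$, is therefore a $\tilde KK$-equivalence $C_0(X)\to C_0(E)$, outer products of equivalences being equivalences. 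This is the only place where outer products enter.

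Now let $E\to X$ be a complex bundle admitting a complement, i.e.\ one for which there is $F\to X$ with $E\oplus F\cong X\times\mC^{N}$; this holds, for instance, when $X$ is compact, or locally compact of finite covering dimension. Following \cite{kkT}, attach to every complex bundle $V\xrightarrow{\pi}Y$ a Thom element $t_{V}\in\tilde KK(C_0(Y),C_0(V))$, natural in $(Y,V)$, agreeing with the class of the previous paragraph when $V$ is trivial, and \emph{multiplicative}: for $V\xrightarrow{\pi}Y$ and $W\to Y$ the total space of $\pi^{*}W\to V$ is $V\oplus W$, and
\[ t_{V\oplus W}=t_{\pi^{*}W}\circ t_{V}\qquad\text{in}\quad\tilde KK\bigl(C_0(Y),C_0(V\oplus W)\bigr),\]
the right-hand side being the composite in the category $\tilde KK$. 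Granting this, $t_{E\oplus F}$ is invertible by the trivial case, so from $t_{E\oplus F}=t_{\pi^{*}F}\circ t_{E}$ one reads off that $t_{E}$ is a split monomorphism and $t_{\pi^{*}F}$ a split epimorphism in $\tilde KK$. But $\pi^{*}F\to E$ itself has a complement, namely $\pi^{*}E$, since $\pi^{*}F\oplus\pi^{*}E=\pi^{*}(E\oplus F)$ is trivial; applying the same reasoning over the base $E$ shows $t_{\pi^{*}F}$ is a split monomorphism as well, hence an isomorphism. Therefore $t_{E}=t_{\pi^{*}F}^{-1}\circ t_{E\oplus F}$ is an isomorphism in $\tilde KK$, and applying $H$ gives $H(C_0(E))\approx H(C_0(X))$.

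The step I expect to be the real obstacle is the construction of the Thom elements $t_{V}$, with their naturality and multiplicativity, carried out purely formally --- without the Dolbeault or Koszul descriptions of $KK$-cycles; this is exactly the content of \cite{kkT}, whose only inputs are Bott periodicity and the outer products on $\tilde KK$. Alternatively one can bypass the multiplicativity formalism and treat a general $E$ by a Mayer--Vietoris induction over a finite trivialising cover of $X$, comparing by the five lemma the Thom maps of the restrictions of $E$ and using that $\tilde KK(D,\,\cdot\,)$ carries the long exact sequences of a split exact, homotopy invariant, $\mK$-stable functor; this too reduces the whole statement to the trivial bundles of the first paragraph.
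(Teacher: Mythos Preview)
Your proposal is correct and follows exactly the route the paper indicates: the paper does not give a proof beyond the sentence preceding the Proposition, which says that since $\tilde KK$ admits outer products one may adopt the formal argument of \cite{kkT}, using only Bott periodicity. You have simply unpacked that argument---factoring $H$ through $\tilde KK$ via the universal property, invoking Bott periodicity plus outer products for trivial bundles, and then running the complement trick $t_{E\oplus F}=t_{\pi^*F}\circ t_E$ to bootstrap to general bundles---which is precisely the content of \cite{kkT}.
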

\section{Stable homotopy theory of extensions}
The Lemma \ref{stablepushout} further allows to define a theory of extensions based only on stable homotopy theory without stabilising by matrices.   In fact, for any two extensions
 
\[  \xymatrix{0\ar[r]&A\ar[r]&D_i\ar[r]&B\ar[r]&0}\]
of $B$ over $A$, we may define a sum by first passing to the pull-back  along the diagonal $B\to B\oplus B, \; b\mapsto (b,b)$ to obtain  
\[\xymatrix{0\ar[r]&A\oplus A\ar[r]&D_1\oplus D_2\ar[r]&B\oplus B\ar[r]&0}\]
Taking the stable homotopy pushout as in Lemma \ref{stablepushout} of the extension 
\[\xymatrix{ 0\ar[r]&S(A\oplus A)\ar[r]&S\big((D_1\oplus D_2)\oplus_\Delta B\big)\ar[r] \ar[r]&SB\ar[r]&0}.\]
 along the map $SA\oplus SA\to SA$ given by concatenation then defines a stable homotopy version of the  "Baer sum" of extensions.
\section{Bivariant $K$-theory for more general algebras}
The constructions outlined above carry over to other categories of algebras such as bornological, locally convex or Banach algebras. In these settings, one uses rather diffotopy and projective tensor products, together with appropriate versions of cylinders, cones and suspensions. 

The only point which differs significantly is the stabilisation. In the more general settings, following Cuntz, one may stabilize by smooth compact operators or the operator ideals $\mc{L}^p$ of $p$-summable operators.

However, it is more difficult to show that the functor obtained as in Theorem \ref{refme} is stable. One may instead "formally stabilize" in order to obtain a universal functor. If the inclusion $\mc{L}^p\to \mc{L}^p\otimes_\pi\mc{L}^p$ is invertible up to diffeotopy (a fact which seems very likely to be true, but which we have not been able to prove at present), then the two ways to stabilize can be seen to coincide.
\vspace{1cm}

Martin \textsc{Grensing}, \texttt{grensing@gmx.net},

Homepage: \href{http://www.martin.grensing.net}{Martin.Grensing.net},

D\'epartement de Math\'ematiques -- Universit\'e d'Orl\'eans, 

B.P. 6759 -- 45 067 Orl\'eans cedex 2, France.

\bibliographystyle{alpha}
\bibliography{../../../Fullbib}
\end{document}